\newtheorem{thm}{Theorem}[section]
\newtheorem*{thm*}{Theorem}
\newtheorem{lem}[thm]{Lemma}
\newtheorem*{lem*}{Lemma}
\newtheorem*{cor*}{Corollary}
\newtheorem*{prop*}{Proposition}
\theoremstyle{definition} 
\newtheorem{defn}[thm]{Definition}
\newtheorem*{defn*}{Definition}
\theoremstyle{remark}
\newtheorem*{rem*}{Remark}
\newtheorem*{example*}{Example}
\newtheorem*{que*}{Question}
\newcommand{\N}{\mathbb N}
\newcommand{\Z}{\mathbb Z}
\newcommand{\norm}[1]{\left\Vert#1\right\Vert}
\newcommand{\abs}[1]{\left\vert#1\right\vert}
\newcommand{\set}[1]{\left\{#1\right\}}
\newcommand{\eps}{\varepsilon}
\newcommand{\ind}{\mathbbm{1}}
\newcommand{\F}{\mathcal{F}}
\renewcommand{\epsilon}{\varepsilon}
\renewcommand{\leq}{\leqslant}
\renewcommand{\geq}{\geqslant}
\title{A comment on ergodic theorem for amenable groups}
\author{Bartosz Frej}
\author{Dawid Huczek}
\keywords{amenable group, group action, concentration inequality, ergodic average}
\subjclass[2010]{37A15,37A30}
\thanks{Research of both authors is supported from resources for science in years 2013-2018 as research project (NCN grant 2013/08/A/ST1/00275, Poland)}
\begin{document}
\begin{abstract}
We prove a version of ergodic theorem for an action of an amenable group, where a F\o lner sequence needs not to be tempered. Instead, it is assumed that a function satisfies certain mixing condition.
\end{abstract}

\maketitle

\renewcommand{\thethm}{\arabic{thm}}

In \cite{L} E.Lindenstrauss proved the ergodic theorem for actions of amenable groups, which is commonly used. The F\o lner sequence along which ergodic averages converge to an invariant function must satisfy the condition of being tempered. Since every F\o lner sequence has a subsequence which is tempered, the theorem is sufficient for many applications. In \cite{BDM} it was shown that for the Bernoulli groups shift the assumption that the F\o lner sequence is tempered may be relaxed if one considers frequency of visits in a cylinder set. The aim of the current paper is to push further in this direction and ivestigate in what circumstances temperedness is not necessary.

Let $G=\{g_1,g_2,...\}$ be an amenable group and $(F_n)_{n\in\N}$ a F\o lner sequence in $G$. Assume that for every $\alpha\in[0,1)$ the series $\sum_{n=1}^\infty \alpha^{|F_n|}$ converges. Since it is already satisfied if $|F_n|$ strictly increases, the assumption is much weaker than temperedness. 
	For a finite nonempty set $F\subset G$ and any set $S\subset G$ we denote
	\[
	\underline d_F(S)=\inf_{g\in G} \frac{|S\cap Fg|}{|F|} \qquad \textrm{and}\qquad \overline 
	d_F(S)=\sup_{g\in G} \frac{|S\cap Fg|}{|F|}.
	\]
	The \emph{lower} and \emph{upper} \emph{Banach densities} of 
	$S$ are defined by formulas:
		\begin{align*}
		\underline d(S) &= \sup\{\underline d_F(S): F\subset G, F \text{ is 
		finite}\},\\
		\overline d(S) &= \,\inf\,\{\overline d_F(S): F\subset G, F \text{ is 
		finite}\}.
	\end{align*}
	If $(F_n)$ is a F\o lner sequence then we also have
	\[
	\underline d(S)=\lim_{n\to\infty} \underline d_{F_n}(S) \ \ \ \text{ and 
	} \ \ \ \overline d(S)=\lim_{n\to\infty} \overline d_{F_n}(S).
	\]
	The following lemma was proved in \cite{BDM}.
\begin{lem}	\label{partition}
For every finite set $K\subset G$ and $\delta>0$ there exists a partition $D_0,D_1,...,D_r$ of $G$ such that 
\begin{enumerate}
	\item $\overline{d}_{(F_n)}(D_0) \leq \delta$,
	\item $\underline{d}_{(F_n)}(D_i) >0$ for every $i=1,...,r$,
	\item for every $i=1,...,r$, if $g,h\in D_i$ then $Kg \cap Kh = \varnothing$.
\end{enumerate}
\end{lem}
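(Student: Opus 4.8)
The plan is to construct the partition greedily, separating $G$ into "good" pieces where the third (disjointness) condition holds and a "bad" remainder $D_0$ that we must keep small in upper Banach density.

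Put $E=K^{-1}K$ and $m=\abs{E}$; then condition (3) for a set $D$ is exactly the assertion that distinct $g,h\in D$ satisfy $hg^{-1}\notin E$, i.e. that $D$ is \emph{$E$-separated}. Two facts drive the argument. First, right translation $D\mapsto Ds$ preserves both $E$-separatedness and lower Banach density, since $\abs{Ds\cap Fg}=\abs{D\cap F(gs^{-1})}$ and separation depends only on quotients $hg^{-1}$. Second, by Zorn's lemma a maximal $E$-separated set $P$ exists, and maximality forces $EP=G$: every $g$ lies in $Ep$ for some $p\in P$, for otherwise $g$ could be adjoined to $P$. Since each $p$ covers at most $m$ points of any window, this already gives $\underline d(P)\geq 1/m^2>0$, so a single maximal separated set is a legitimate ``good'' piece.

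The construction I would use is to peel such sets off repeatedly. Set $R_0=G$ and, recursively, let $P_i$ be a maximal $E$-separated subset of $R_{i-1}$ and $R_i=R_{i-1}\setminus P_i$. Maximality yields the covering inclusion $R_{i-1}\subseteq EP_i$. Writing a window as $F_ng$ and using $\abs{e^{-1}F_n\symdiff F_n}=o(\abs{F_n})$ for the F\o lner sequence, this inclusion gives, uniformly in $g$,
\[
\abs{R_{i-1}\cap F_ng}\leq\sum_{e\in E}\abs{P_i\cap e^{-1}F_ng}\leq m\,\abs{P_i\cap F_ng}+o(\abs{F_n}),
\]
and hence $\abs{R_i\cap F_ng}\leq(1-\tfrac1m)\abs{R_{i-1}\cap F_ng}+o(\abs{F_n})$. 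Iterating from $R_0=G$ and passing to the limit along $(F_n)$ produces $\overline d(R_r)\leq(1-\tfrac1m)^r$. Choosing $r$ with $(1-\tfrac1m)^r\leq\delta$ and setting $D_0=R_r$ secures (1), while $D_1=P_1,\dots,D_r=P_r$ are $E$-separated, giving (3).

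The real work is condition (2). Reading the same inclusion $R_{i-1}\subseteq EP_i$ in the window $EF_n$ shows $\underline d(P_i)\geq\tfrac1{m^2}\,\underline d(R_{i-1})$, so each retained piece is non-negligible exactly as long as the current remainder is thick. The main obstacle is to guarantee that the remainders keep positive lower Banach density up to the stage at which their upper density has fallen below $\delta$: a priori the peeling could leave $\underline d(R_{i-1})=0$ while $\overline d(R_{i-1})$ is still large, concentrating the removed mass in some windows and emptying others. I would control this by taking $r$ minimal with $\overline d(R_r)\leq\delta$ and exploiting that a removed set, being $E$-separated, has upper density at most $\tfrac12$ (indeed $P_i$ and $eP_i$ are disjoint for $e\in E\setminus\set{\id}$), so that the decomposition $R_{i-1}=R_i\sqcup P_i$ and the estimate $\underline d(R_i)\geq\underline d(R_{i-1})-\overline d(P_i)$ keep the remainder thick across the finitely many stages; any piece that still turns out to have zero lower density is absorbed into $D_0$, whose upper density stays below $\delta$ by subadditivity together with the geometric bound $\overline d(P_i)\leq(1-\tfrac1m)^{i-1}$. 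Closing this density bookkeeping, so that the thick part of $G$ is exhausted by finitely many separated pieces of positive lower density, is the crux of the proof.
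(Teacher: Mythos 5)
A preliminary remark: the paper does not prove this lemma at all --- it is quoted from \cite{BDM} --- so there is no in-paper proof to match; your attempt has to be judged on its own merits. On those merits, the first two thirds are sound: the reformulation of (3) as $E$-separation with $E=K^{-1}K$, the syndeticity $G=EP$ of a maximal separated set, the covering inclusion $R_{i-1}\subseteq EP_i$, and the Følner-window estimate giving $\overline d(R_r)\leq(1-\tfrac1m)^r$ are all correct, so conditions (1) and (3) are within reach. But the step you yourself flag as the crux --- condition (2) --- is a genuine gap, and the bookkeeping you sketch provably cannot close it. From $\underline d(R_i)\geq\underline d(R_{i-1})-\overline d(P_i)$ and your bounds $\overline d(P_i)\leq\min\set{\tfrac12,(1-\tfrac1m)^{i-1}}$ you can only conclude $\underline d(R_r)\geq 1-\sum_{i=1}^r\overline d(P_i)$, and this sum exceeds $1$ after roughly $m$ steps (note $\sum_{i\geq1}(1-\tfrac1m)^{i-1}=m$, and even the $\tfrac12$-bound kills the estimate after two steps), whereas driving $\overline d(R_r)$ below $\delta$ requires on the order of $m\log(1/\delta)$ peelings. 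So positivity of the remainder's lower density is guaranteed for far fewer stages than the peeling needs.

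Moreover, the failure is not merely an artifact of lossy estimates: greedy maximality genuinely does not suffice, because the maximal sets must be chosen in a globally coordinated way, and nothing in your construction coordinates them. Take $G=\Z$, $K=\set{0,1}$, $E=\set{-1,0,1}$. The set $P_1=\set{2k:k<0}\cup\set{3k:k\geq0}$ is a maximal $E$-separated subset of $\Z$, and $P_2=\set{2k+1:k<0}\cup\set{3k+1:k\geq0}$ is a maximal $E$-separated subset of $R_1=\Z\setminus P_1$. The remainder $R_2=\set{3k+2:k\geq0}$ then has $\underline d(R_2)=0$ but $\overline d(R_2)=\tfrac13$; being itself $E$-separated, it is the final piece $P_3$ of your process. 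It violates (2), cannot be absorbed into $D_0$ when $\delta<\tfrac13$, and cannot be merged with $P_1$ or $P_2$ without destroying separation (the triples $1,2,3$ witness this). Since $\Z=\bigcup_{i=0,1,2}(3\Z+i)$ does satisfy the lemma here (with $D_0=\emptyset$), the statement is true but your argument does not reach it: the missing idea is precisely a mechanism forcing every piece to be thick in \emph{every} window --- e.g.\ a construction from globally structured objects such as exact tilings of the group by sufficiently invariant shapes --- rather than stage-by-stage maximal choices, and that mechanism is the actual content of the proof in \cite{BDM}.
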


Let $G$ act via measure preserving transformations on a probability space $(X,\mu)$. 
To simplify the notation we will indentify $G$ with the related group of automorphisms and write $gx$ for the outcome of the action of an automorphism associated to $g$ on $x$, and $f\circ g$ for the composition of a function $f$ and the automorphism.
According to \cite{AdJ}, for any action of $\Z$ one can find a F\o lner sequence (with cardinalities of sets increasing slowly) such that the ergodic averages with respect to that sequence fail to converge for some function. Therefore, some constraints must be put on the function, whose ergodic averages we study. For $\F\subset L^1(\mu)$ let $\sigma(\F)$ denote the smallest sub-$\sigma$-algebra with respect to which every $f\in\F$ is measurable.
\begin{defn}
We will say that $f$ is \emph{$\eps$-independent from a sub-$\sigma$-algebra $\Sigma_0$} if for every $B\in\Sigma_0$ of positive measure it holds that 
\begin{equation}	\label{mix}
\abs{\int_B f d\mu_B - \int f d\mu} < \eps,
\end{equation}
where $\mu_B$ is the conditional measure on $B$.
\end{defn} 
\noindent A set $A$ is $\eps$-independent from $\Sigma_0$ if its charachteristic function $\ind_A$ is, i.e., for all $B\in\Sigma_0$ such that $\mu(B)>0$,
\[
\abs{\mu_B(A)-\mu(A)}<\eps
\]
or, in other words,
\[
|\mu(A \cap B) - \mu(A)\mu(B)| <\eps\mu(B).
\]

\begin{thm} \label{main}
Let $f\in L^\infty(\mu)$ be such that for every $\eps>0$ there exists a finite set $K\subset G$ such that $f$ is $\eps$-independent from $\sigma(\{f\circ g: g\not\in K\})$.
Then
\[
\lim_{n\to\infty} \frac1{|F_n|} \sum_{g\in F_n} f(gx) = \int f d\mu \qquad \mu\textrm{-a.e.}
\]
\end{thm}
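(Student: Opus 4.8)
The plan is to deduce almost-everywhere convergence from a Borel--Cantelli argument powered by a concentration inequality, with the summability hypothesis $\sum_n \alpha^{|F_n|}<\infty$ exactly matching the exponential decay that concentration provides. First I would reduce to the case $\int f\,d\mu=0$: replacing $f$ by $f-\int f\,d\mu$ changes neither the averages (up to the additive constant) nor the $\eps$-independence hypothesis, since $\sigma(\{(f-c)\circ g\})=\sigma(\{f\circ g\})$ and the defining inequality \eqref{mix} is insensitive to constant shifts. Writing $A_nf(x)=\frac1{|F_n|}\sum_{g\in F_n}f(gx)$, it then suffices to show, for every $\eta>0$, that $\sum_n\mu(\{x:|A_nf(x)|>\eta\})<\infty$; Borel--Cantelli gives $\limsup_n|A_nf|\le\eta$ a.e., and intersecting over $\eta=1/k$ yields $A_nf\to0$ a.e. Note that the hypothesis forces $|F_n|\to\infty$.

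Fix $\eta$ and choose small positive constants $\eps,\delta,\eta',\eta''$ with $\eps+\eta'+2\delta\pnorm{\infty}{f}+\eta''<\eta$. For this $\eps$ the mixing hypothesis yields a finite $K$; enlarging $K$ (which only shrinks the relevant $\sigma$-algebra and hence preserves $\eps$-independence) I may assume the identity lies in $K$. Applying Lemma~\ref{partition} to $K$ and $\delta$ produces a partition $D_0,\dots,D_r$. The crucial translation of the hypothesis is this: applying the measure-preserving automorphism $g$ shows that $f\circ g$ is $\eps$-independent from $\sigma(\{f\circ h:h\notin Kg\})$. By condition~(3) of the Lemma together with $e\in K$, distinct $g,h\in D_i$ satisfy $h\notin Kg$; consequently, inside a single block $D_i$ each variable $f\circ g$ is $\eps$-independent from the $\sigma$-algebra generated by all the others.

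The heart of the argument is a concentration estimate per block, obtained via a martingale-difference (Azuma--Hoeffding) inequality rather than naive independence. Enumerate $F_n\cap D_i=\{g_1,\dots,g_{m_i}\}$, put $X_k=f\circ g_k$ and $\mathcal F_k=\sigma(X_1,\dots,X_k)$, and set $Y_k=X_k-\mathbb E[X_k\mid\mathcal F_{k-1}]$. These are bounded martingale differences, so Azuma's inequality gives $\mu(|\sum_k Y_k|>\eta' m_i)\le 2\exp(-(\eta')^2 m_i/(8\pnorm{\infty}{f}^2))$. Since $\mathcal F_{k-1}\subseteq\sigma(\{f\circ h:h\notin Kg_k\})$, the $\eps$-independence yields $|\mathbb E[X_k\mid\mathcal F_{k-1}]|\le\eps$ a.e., so the drift $\sum_k\mathbb E[X_k\mid\mathcal F_{k-1}]$ is bounded by $m_i\eps$; this is the step where using $Y_k$ (instead of the Doob martingale of the whole sum) prevents the $\eps$-errors from accumulating over the unprocessed future coordinates. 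Hence, writing $S_i=\sum_{g\in F_n\cap D_i}f\circ g$, \[ \mu\bigl(|S_i|>(\eps+\eta')m_i\bigr)\le 2\exp\bigl(-(\eta')^2 m_i/(8\pnorm{\infty}{f}^2)\bigr). \]

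Finally I would assemble the blocks. The $D_0$ part is handled deterministically: since $|F_n\cap D_0|/|F_n|\le\overline d_{F_n}(D_0)\to\overline d(D_0)\le\delta$, it contributes at most $2\delta\pnorm{\infty}{f}$ to $|A_nf|$ for large $n$. Splitting the remaining blocks at the threshold $\theta=\eta''/(r\pnorm{\infty}{f})$, the \emph{small} blocks ($m_i<\theta|F_n|$) together contribute less than $r\theta\pnorm{\infty}{f}=\eta''$ to $|A_nf|$ deterministically, while for each of the at most $r$ \emph{large} blocks the estimate above gives a bound $2\alpha^{|F_n|}$ with $\alpha=\exp(-(\eta')^2\theta/(8\pnorm{\infty}{f}^2))<1$ fixed. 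Thus on the complement of $\bigcup_{\text{large }i}\{|S_i|>(\eps+\eta')m_i\}$ one has $|A_nf|\le 2\delta\pnorm{\infty}{f}+\eta''+\eps+\eta'<\eta$, whence $\mu(\{|A_nf|>\eta\})\le 2r\alpha^{|F_n|}$. The series $\sum_n 2r\alpha^{|F_n|}$ converges by hypothesis, and Borel--Cantelli completes the proof. I expect the main obstacle to be precisely this robust concentration step: extracting exponential decay from variables that are only $\eps$-independent, which forces both the martingale-difference formulation with controlled drift and the small/large block dichotomy needed to keep the exponent proportional to $|F_n|$ uniformly in $n$.
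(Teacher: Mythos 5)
Your proof is correct and follows essentially the same route as the paper: the same partition lemma, the same within-block martingale-difference (Doob) decomposition with the translated $\eps$-independence controlling the predictable drift, and the same Azuma--Hoeffding concentration plus Borel--Cantelli conclusion. The only deviation is your small/large block dichotomy at threshold $\theta|F_n|$, where the paper instead invokes property (2) of Lemma~\ref{partition} (positive lower Banach density of each $D_i$) to get $\min_i|F_n\cap D_i|>\beta|F_n|$ for large $n$ --- a cosmetic difference that, if anything, lets your version avoid using that property of the partition.
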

\noindent Note that the neutral element of $G$ belongs to $K$ for non-constant $f$.

Before the proof let us recall the following concentration inequality.
\begin{thm}[Azuma-Hoeffding inequality]
Suppose that $(M_n)_{n\in\N}$ is a martingale on $(X,\mu)$ such that $M_0=0$, $EM_n=0$ for all $n\in\N$ and $|M_k-M_{k-1}|\leq d_k$ almost surely for some constants $d_k$.
Then, for every $\eps>0$
\[
\mu\left(\set{x:|M_n(x)|>\eps}\right) \leq 2\exp\left(\frac{-\eps^2}{2\sum_{k=1}^n d_k^2}\right)
\]
\end{thm}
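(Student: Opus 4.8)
The plan is to use the standard exponential-moment (Chernoff) method together with a convexity estimate for the martingale increments. First I would reduce the two-sided tail bound to a one-sided one: since $(-M_n)_{n\in\N}$ is again a martingale satisfying exactly the same hypotheses ($M_0=0$, centered, increments bounded by the same $d_k$), it suffices to prove
\[
\mu\left(\set{x: M_n(x) > \eps}\right) \leq \exp\left(\frac{-\eps^2}{2\sum_{k=1}^n d_k^2}\right)
\]
and then add the two symmetric estimates (the events $\set{M_n>\eps}$ and $\set{M_n<-\eps}$ being disjoint) to obtain the factor $2$.

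For the one-sided bound, fix $\lambda>0$ and apply Markov's inequality to the random variable $e^{\lambda M_n}$:
\[
\mu\left(\set{x: M_n(x)>\eps}\right) = \mu\left(\set{x: e^{\lambda M_n(x)}>e^{\lambda\eps}}\right) \leq e^{-\lambda\eps}\, E\!\left[e^{\lambda M_n}\right].
\]
The crux is to control the exponential moment $E[e^{\lambda M_n}]$. Writing $M_n=M_{n-1}+(M_n-M_{n-1})$ and conditioning on the filtration $\mathcal F_{n-1}$ with respect to which $M_{n-1}$ is measurable, the tower property gives
\[
E\!\left[e^{\lambda M_n}\right] = E\!\left[e^{\lambda M_{n-1}}\, E\!\left[e^{\lambda(M_n-M_{n-1})}\mid \mathcal F_{n-1}\right]\right].
\]

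The key lemma, which I expect to be the heart of the argument, is the estimate $E[e^{\lambda Y}\mid \mathcal F]\leq e^{\lambda^2 d^2/2}$ for any conditionally centered random variable $Y$ (i.e. $E[Y\mid\mathcal F]=0$) with $|Y|\leq d$ almost surely. This follows from the convexity of $t\mapsto e^{\lambda t}$ on $[-d,d]$: bounding the exponential by the chord joining its values at the endpoints $\pm d$, taking conditional expectation, and using $E[Y\mid\mathcal F]=0$ yields $E[e^{\lambda Y}\mid\mathcal F]\leq \cosh(\lambda d)$, after which the elementary inequality $\cosh t\leq e^{t^2/2}$ (a term-by-term comparison of Taylor series) finishes it. The martingale property supplies precisely the conditional centering of the increment $Y=M_n-M_{n-1}$.

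Applying this increment bound with $d=d_n$ and iterating the conditioning over $k=n,n-1,\dots,1$ gives $E[e^{\lambda M_n}]\leq \exp\bigl(\frac{\lambda^2}{2}\sum_{k=1}^n d_k^2\bigr)$, whence
\[
\mu\left(\set{x: M_n(x)>\eps}\right) \leq \exp\left(-\lambda\eps + \frac{\lambda^2}{2}\sum_{k=1}^n d_k^2\right).
\]
The final step is to optimize the free parameter: this exponent is a quadratic in $\lambda$ minimized at $\lambda=\eps/\sum_{k=1}^n d_k^2$, which produces exactly $\exp(-\eps^2/(2\sum_{k=1}^n d_k^2))$. Doubling for the two-sided estimate completes the proof. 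The only genuinely delicate point is the conditional convexity lemma; once it is in place, the remainder is the routine induction and single-variable optimization above.
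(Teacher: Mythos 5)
The paper only \emph{recalls} the Azuma--Hoeffding inequality and gives no proof of it (it is used as a known black box in the proof of Theorem \ref{main}), so there is no in-paper argument to compare against; your proposal is the standard and correct proof of this classical result: reduction to a one-sided bound via $-M_n$, Chernoff bounding of $e^{\lambda M_n}$, the conditional Hoeffding lemma $E\left[e^{\lambda Y}\mid\mathcal F\right]\leq\cosh(\lambda d)\leq e^{\lambda^2d^2/2}$ obtained from the chord bound for the convex exponential and the termwise comparison $(2k)!\geq 2^k k!$, iteration through the tower property, and optimization at $\lambda=\eps/\sum_{k=1}^n d_k^2$. All steps are sound (note that the hypothesis $EM_n=0$ is redundant given $M_0=0$ and the martingale property, and in the degenerate case $\sum_{k=1}^n d_k^2=0$ the bound holds vacuously since then $M_n=0$ almost surely), so nothing further is needed.
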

\begin{proof}[Proof of thm.\ref{main}]
Fix a function $f\in L^\infty(\mu)$ and a number $\eps>0$ and let $K$ be as in the assumption, chosen for $\eps/2$. Using lemma \ref{partition}, choose a partition $D_0,D_1,...,D_r$ for $\delta=\eps/(7\norm{f}_\infty)$.
For $H\subset G$, $|H|<\infty$, define
\begin{gather*}
Y_H=\sum_{g\in H} f\circ g \qquad
\overline{Y}_H = \frac1{|H|} Y_H
\end{gather*}
Then $\int f d\mu = E\overline{Y}_H$.

Let $H^{(n,i)}_k=\{g_{i_1},...,g_{i_k}\}$ be a set of $k$ first elements of $G$ belonging to $H^{(n,i)}=F_n \cap D_i$.
Let $\F^{(n,i)}_k=\sigma(\{f\circ g:g\in H^{(n,i)}_k\})$. 
Clearly, $\{\F^{(n,i)}_k\}$ is a finite filtration and for each pair $(n,i)$ the process $Y^{(n,i)}_k=Y_{H^{(n,i)}_k}$ is adapted to it. Let $\F_0^{(n,i)}$ be the trivial $\sigma$-algebra in $X$ and let $Y_0^{(n,i)}=0$ a.s. By Doob's decomposition,
\[
Y^{(n,i)}_k = M_k^{(n,i)} + N_k^{(n,i)},
\]
where 
\[
M_k^{(n,i)}=\sum_{j=1}^k (Y_j^{(n,i)}-E(Y_j^{(n,i)}|\F_{j-1}^{(n,i)}))
\]
is a martingale and
\begin{eqnarray}
N_k^{(n,i)}&=&\sum_{j=1}^k \left(E(Y_j^{(n,i)}|\F_{j-1}^{(n,i)})-Y_{j-1}^{(n,i)}\right) \nonumber \\
&=&\sum_{j=1}^k E\left(f\circ g_{i_j}|\F_{j-1}^{(n,i)}\right)	\label{to_estimate}
\end{eqnarray}
is a predictable process, i.e. each $N_k^{(n,i)}$ is $\F_{k-1}^{(n,i)}$-measurable.
Then $EM_k^{(n,i)}=0$ and $EN_k^{(n,i)}=k\int f d\mu$. Note also, that
\begin{eqnarray*}
|M_k^{(n,i)}-M_{k-1}^{(n,i)}| &\leq&  |Y_k^{(n,i)}-Y_{k-1}^{(n,i)}|+|N_k^{(n,i)}-N_{k-1}^{(n,i)}| \\
& =& |f\circ g_{i_k}| + |E(f\circ g_{i_k}|\F_{k-1}^{(n,i)})| \leq 2||f||_\infty.
\end{eqnarray*}
Similarly, we write $Y_{H^{(n,i)}} = M^{(n,i)} + N^{(n,i)}$ and we denote:
\[
\overline{M}^{(n,i)} = \frac1{|H^{(n,i)}|}M^{(n,i)}, \qquad \overline{N}^{(n,i)} = \frac1{|H^{(n,i)}|}N^{(n,i)}
\]

We will estimate the following quantity:
\begin{multline*}
\mu\left(\set{x:\abs{\overline{Y}_{F_n}(x)-E\overline{Y}_{F_n}}>\eps }\right)\\
\leq \mu\left(\set{x:\sum_{i=0}^r \frac{\abs{H^{(n,i)}}}{|F_n|} \abs{\overline{Y}_{H^{(n,i)}}(x)-E\overline{Y}_{H^{(n,i)}}}>\eps}\right)\\
\leq \mu(\mathcal{M}_n) + \mu(\mathcal{N}_n),
\end{multline*} 
where
\[
\mathcal{M}_n=\set{x:\begin{split}&\frac{\abs{H^{(n,0)}}}{|F_n|} \abs{\overline{Y}_{H^{(n,0)}}(x)-E\overline{Y}_{H^{(n,0)}}} +\\&+ \sum_{i=1}^r \frac{\abs{H^{(n,i)}}}{|F_n|} \abs{\overline{M}^{(n,i)}(x)-E\overline{M}^{(n,i)}}>\eps/2 \end{split}}
\]
and
\[
\mathcal{N}_n=\set{x:\sum_{i=1}^r \frac{\abs{H^{(n,i)}}}{|F_n|} \abs{\overline{N}^{(n,i)}(x)-E\overline{N}^{(n,i)}}>\eps/2}
\]

Below we estimate the second summand.
For each $j$,
\[
\abs{E\left(f\circ g_{i_j}|\F_{j-1}^{(n,i)}\right) - \int f d\mu} < \eps/2
\]
almost surely, because $f\circ g_{i_j}$ is $\eps/2$-independent from $\F_{j-1}^{(n,i)}$. Then
\[
\abs{\left(\overline{N}^{(n,i)}(x)-E\overline{N}^{(n,i)}\right)} \leq \frac1{H^{(n,i)}}\sum_{j\in H^{(n,i)}}\abs{E\left(f\circ g_{i_j}|\F_{j-1}^{(n,i)}\right) - \int f d\mu} < \eps/2
\]
almost surely.
Hence also the average satisfies
\[
\sum_{i=1}^r \frac{\abs{H^{(n,i)}}}{|F_n|} \abs{\overline{N}^{(n,i)}(x)-E\overline{N}^{(n,i)}} < \eps/2,
\]
so $\mu(\mathcal{N}_n)=0$.

Now we will estimate $\mu(\mathcal{M}_n)$. We have $\overline{d}_{(F_n)}(D_0) \leq \eps/7\norm{f}_\infty$, so for large~$n$,  
\[
\frac{\abs{H^{(n,0)}}}{|F_n|}<\eps/6\norm{f}_\infty
\]
Clearly, $Y_{H^{(n,0)}}$ is a sum of functions with bounded range, hence
\[
\frac{\abs{H^{(n,0)}}}{|F_n|}\abs{\overline{Y}_{H^{(n,0)}}(x)-E\overline{Y}_{H^{(n,0)}}} \leq \frac{\abs{H^{(n,0)}}}{|F_n|}\cdot2\norm{f}_\infty < \eps/3
\]
for large $n$.

By Azuma-Hoeffding inequality, for each $i=1,...,r$ it holds that 
\begin{multline*}
\mu\left(\set{x:\abs{\overline{M}^{(n,i)}(x)-E\overline{M}^{(n,i)}}>\eps/6}\right) =\\
= \mu\left(\set{x:\abs{M^{(n,i)}(x)-EM^{(n,i)}}>\eps/6\abs{H^{(n,i)}}}\right) \leq 2\gamma^{\abs{H^{(n,i)}}}
\end{multline*}
for $\gamma=\exp\left(-\frac{\eps^2}{8||f||_\infty^2}\right)<1$ (recall that $|M_k^{(n,i)}-M_{k-1}^{(n,i)}|$ is bounded almost surely). 
Denoting 
\[
X_\eps=\set{x:\exists i=1,...,r\ \abs{\overline{M}^{(n,i)}(x)-E\overline{M}^{(n,i)}}>\eps/6}
\]
we obtain
\[
\mu(X_\eps)\leq 2r\cdot \gamma^{\min_i|H^{(n,i)}|}. 
\]
Thus, outside $X_\eps$ not only $\abs{\overline{M}^{(n,i)}(x)-E\overline{M}^{(n,i)}}\leq\eps/6$, but also the weighted average satisfies the same inequality
\[
\sum_{i=1}^r \frac{\abs{H^{(n,i)}}}{|F_n|} \abs{\overline{M}^{(n,i)}(x)-E\overline{M}^{(n,i)}}\leq\eps/6,
\]
so it is only the set $X_\eps$ on which it may happen that
\[
\frac{\abs{H^{(n,0)}}}{|F_n|} \abs{\overline{Y}_{H^{(n,0)}}(x)-E\overline{Y}_{H^{(n,0)}}} + \sum_{i=1}^r \frac{\abs{H^{(n,i)}}}{|F_n|} \abs{\overline{M}^{(n,i)}(x)-E\overline{M}^{(n,i)}}>\eps/2
\]
By positive density of each $D_i$, there is a positive number $\beta$ such that $\min_i|H^{(n,i)}| > \beta |F_n|$ for large $n$.
We obtain 
\[
\mu(\mathcal{M}_n) \leq 2r\cdot \gamma^{\min_i|H^{(n,i)}|} \leq 2r\left(\gamma^\beta\right)^{|F_n|} =: 2r\alpha^{|F_n|}
\]
By the assumption on the F\o lner sequence $(F_n)$, the series $\sum_n \alpha^{|F_n|}$ converges. By Borel-Cantelli lemma we obtain that 
\[
\mu\left(\limsup_n \mathcal{M}_n\right)=0,
\]
so
\[
\begin{split}
\mu\left(\set{x:\forall N\ \exists n\geq N \ \abs{\overline{Y}_{F_n}(x)-\int f d\mu}>\eps}\right) \leq \\
\leq\mu\left(\limsup_n \mathcal{M}_n\right) + \mu\left(\limsup_n \mathcal{N}_n\right)= 0,
\end{split}
\]
Finally, after taking an appropriate countable intersection we obtain
\[
\mu\left(\set{x:\lim_n \overline{Y}_{F_n}(x)=\int f d\mu}\right) = 1.
\]
\end{proof}

\end{document}